\newtheorem{thm}{Theorem}[section]
\newtheorem{rem}[thm]{Remark}
\newtheorem{lem}[thm]{Lemma}
\numberwithin{equation}{section}
 \newcommand{\al}{\alpha}
 \newcommand{\be}{\beta}
 \newcommand{\ld}{\lambda}
 \newcommand{\de}{\delta}
 \newcommand{\om}{\omega}
 \newcommand{\ga}{\gamma}
 \newcommand{\sch}{Schr\"odinger }
 \renewcommand{\P}{\mathcal{P}}
 \renewcommand{\S}{\mathscr{S}}
 \DeclareMathOperator{\tr}{tr}
 \newcommand{\Real}{\mathbb{R}}
 \newcommand{\norm}[1]{\Vert#1\Vert}
 \def\<{\left\langle} \def\>{\right\rangle}
 \def\({\left(} \def\){\right)}
 \newcommand{\n}{\nabla}
 \newcommand{\p}{\partial}
 \renewcommand{\t}[1]{\tilde{#1}}
 \renewcommand{\b}[1]{\bar{#1}}
 \newcommand{\ii}{\mathbf{i}}
 \newcommand{\jj}{\mathbf{j}}
 \newcommand{\kk}{\mathbf{k}}
   \newcommand{\tn}{\tilde{\nabla}}
    \newcommand{\bn}{\bar{\n}}
\begin{document}

\title{Uniqueness of Schr\"odinger Flow on Manifolds}

\author{Chong Song, Youde Wang}

\address{School of Mathematical Sciences, Xiamen University, Xiamen, 361005, P.R.China.}
\email{songchong@xmu.edu.cn}

\address{Academy of Mathematics and Systematic Sciences, Chinese Academy of Sciences, Beijing 100190, P.R. China}
\email{wyd@math.ac.cn}

\subjclass[2010]{37K65, 35Q55, 35A02}
\thanks {The first author is supported by Natural Science Foundation of
Fujian Province of China (No. 2014J01023) and Fundamental Research Funds for the Central University (No. 20720170009). The second author is supported by NSFC (No. 11471316).}


\begin{abstract}
In this paper, we show the uniqueness of \sch flow from a general complete Riemannian manifold to a complete K\"ahler manifold with bounded geometry. While following the ideas of McGahagan\cite{Mc}, we present a more intrinsic proof by using the distance functions and gauge language.
\end{abstract}

\maketitle

\section{Introduction}

The \sch flow, which is independently introduced in \cite{DW1} and \cite{TU}, is a geometric Hamiltonian flow of maps between manifolds. Suppose $M$ is a Riemannian manifold, $N$ is a K\"ahler manifold with complex structure $J$ and $u_0$ is a map from $M$ to $N$. The \sch flow is a time-dependent map $u:[0,T)\times M\to N$ satisfying the equation
\begin{equation}\label{e:sch0}
\left\{\begin{aligned}
\p_t u&=J(u)\tau(u),\\
u(0)&=u_0,
\end{aligned}\right.
\end{equation}
where $\tau(u)$ is the tension field of $u$.

The \sch flow is a natural generalization of the Laudau-Lifshitz equation which emerges from the study of ferromagnetism~\cite{LL}. It is also closely related to the Da Rios equation which models the locally induced motion of a vortex filament~\cite{Da}. The PDE aspects of the \sch flow, including local well-posedness, global regularity and blow-up phenomena, have been intensively studied in the last two decades. We refer to \cite{Ding,BIKT,MRR,RRS} and references therein for various results.

The local existence of the \sch flow from a general Riemannian manifold into a K\"ahler manifold was first obtained by Ding and Wang~\cite{DW2}. By using a parabolic approximation and the geometric energy method, they proved that, if $M$ is an $m$ dimensional compact Riemannian manifold or the Euclidean space $\Real^m$ and the initial map $u_0\in W^{k,2}(M,N)$ with $k\ge[m/2]+2$, then there exists a local solution $u\in L^\infty([0,T),W^{k,2}(M,N))$. When the domain manifold $M=\Real^m$, same existence result was reproved by McGahagan~\cite{Mc} by using a wave map approximating scheme.

The uniqueness of the \sch flow turn out to be a more delicate issue. In~\cite{DW1}, Ding and Wang proved the uniqueness of $C^3$-solutions to the \sch flow when $M$ is compact. It follows from their proof that, when $M$ is compact or the Euclidean space $\Real^m$,  a local solution to the \sch flow in the space $L^\infty([0,T], W^{[m/2]+4,2}(M,N))$ is unique. Their approach is extrinsic since they embed the target manifold $N$ into an ambient Euclidean space $\Real^K$ and compare two solutions $u_1, u_2:M\to N\hookrightarrow \Real^K$ by directly taking their difference.

A more intrinsic method was applied by McGahagan~\cite{Mc} to show that the uniqueness of the \sch flow actually holds in a larger function space. More precisely,  suppose $M=\Real^m$ is the Euclidean space, $N$ is a complete manifold with bounded geometry which is embedded into an Euclidean space $\Real^K$ and let $\S'_m$ be the function space
\[\begin{aligned}
\S'_m&=W^{[\frac{m+3}{2}],2}\cap \dot{W}^{1,\infty}\cap \dot{W}^{2,m}(\Real^m,N)\\
&=\{u:\Real^m\to N\hookrightarrow\Real^K|\norm{u}_{W^{[\frac{m+3}{2}],2}}+\norm{Du}_{L^\infty}+ \norm{D^2u}_{L^m}<\infty\},
\end{aligned}\]
where $D$ is the standard derivative on functions $u:\Real^m\to \Real^K$ and the homogeneous Sobolev space $\dot{W}^{k.p}$ consists of $k$-times weakly differentiable functions $u$ such that $D^\al u\in L^p$ for $|\al|=k$.. Then by comparing the derivative of two solutions via parallel transportation on $N$, McGahagan proved that a solution to the \sch flow in the space $L^\infty([0,T],\S'_m)$ is unique. Here a complete Riemannian manifold is said to have bounded geometry if it has positive injectivity radius and the Riemannian curvature tensor is bounded and has bounded derivatives. By Sobolev embedding theorems, it is easy to see that $W^{[m/2]+2,2}(\Real^m,N)\hookrightarrow \S'_m$. Thus it follows from the existence results that, if $u_0\in W^{k,2}(\Real^m,N)$ for $k\ge [m/2]+2$, then there exists a unique solution $u\in L^\infty([0,T),W^{k,2}(\Real^m,N))$ to the \sch flow (\ref{e:sch0}).

It is natural to ask if the uniqueness of the \sch flow holds for a general complete Riemannian manifold. In~\cite{RRS}, Rodnianski, Rubinstein and Staffilani asserts that for a complete Riemannian manifold $M$ and a complete K\"ahler manifold $N$ both with bounded geometry, the existence and uniqueness of a solution in $C^0([0, T], W^{k,2}(M,N))$ with initial data in $W^{k,2}(M,N)$ for $k\ge [m/2]+2$ follows directly from the work of Ding-Wang~\cite{DW2} and McGahagan~\cite{Mc}. However, a detailed proof is still missing in the literature. In this paper, by exploring the geometric ideas of McGahagan's proof, we obtain the following uniqueness results on complete manifolds.

To state our results, we define the function spaces
\[\begin{aligned}
\S_\infty &=W^{2,2}\cap \dot{W}^{1,\infty}\cap \dot{W}^{2,\infty}(M,N),\\
\S_m &=W^{[\frac{m}{2}]+1,2}\cap \dot{W}^{1,\infty}\cap \dot{W}^{2,m}(M,N).
\end{aligned}\]

\begin{thm}\label{t:main1}
Suppose $M$ is an $m$ dimensional complete manifold with bounded Ricci curvature $Ric_M$, $N$ is a complete K\"ahler manifold with bounded geometry. If $u_1, u_2\in L^\infty([0,T],\S_\infty)$ are two solution to the \sch flow~(\ref{e:sch0}) with the same initial map $u_0\in \S_\infty$, then $u_1=u_2$ a.e. on $[0,T]\times M$.
\end{thm}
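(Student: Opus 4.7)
The plan is to compare two solutions $u_1,u_2$ intrinsically: measure their difference via the distance function on $N$ and their derivatives by parallel transport on $N$, avoiding any extrinsic embedding. Let $\iota_N>0$ denote the injectivity radius of $N$. Since $u_1(0)=u_2(0)=u_0$ and both solutions are continuous in $t$, one picks the maximal $T_*\in(0,T]$ on which $d_N(u_1(x,t),u_2(x,t))<\iota_N/2$ for every $(x,t)\in M\times[0,T_*)$. On this interval, for each $(x,t)$ there is a unique minimizing geodesic $\ga_{x,t}:[0,1]\to N$ from $u_1(x,t)$ to $u_2(x,t)$; set $V(x,t):=\dot\ga_{x,t}(0)\in T_{u_1(x,t)}N$, so that $|V|=d_N(u_1,u_2)$, and let $P(x,t):T_{u_1(x,t)}N\to T_{u_2(x,t)}N$ be parallel transport along $\ga_{x,t}$. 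It then suffices to show $V\equiv 0$ on $[0,T_*)\times M$, since continuity will force $T_*=T$.

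The main tool will be the intrinsic comparison functional
\[
\E(t)=\tfrac12\int_M |V|^2\,\phi_R+\tfrac12\int_M |du_1-P^{-1}du_2|^2\,\phi_R,
\]
where $\phi_R$ is a standard cutoff supported in a geodesic ball $B_R(p_0)\subset M$ with $|\n\phi_R|\le C/R$; the bounded Ricci curvature of $M$ guarantees such cutoffs and controls volume growth, so that $R\to\infty$ can be taken at the end. One has $\E(0)=0$ since $u_1(0)=u_2(0)$. I would then differentiate $\E(t)$ and use $\p_t u_i=J\tau(u_i)$. The derivative of the first term equals $\int_M\<V,J\tau(u_1)-P^{-1}J\tau(u_2)\>\phi_R$; integration by parts together with the K\"ahler identity $\n J=0$ on $N$ converts it into a pairing against $du_1-P^{-1}du_2$, modulo curvature corrections coming from the Jacobi equation along $\ga_{x,t}$. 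The derivative of the second term involves $\p_t(P^{-1}du_2)$, whose intrinsic expansion produces terms of the form $R^N(V,\cdot)du_2$ together with further Jacobi-field corrections. Collecting everything and invoking the $\dot W^{1,\infty}\cap\dot W^{2,\infty}$ bounds on $u_i$ and the boundedness of $R^N$ and $\n R^N$ on $N$, I expect every term to be estimable by $C\cdot\E(t)$ with $C$ independent of $R$. Gronwall together with $\E(0)=0$ will then force $\E(t)\equiv 0$, hence $V\equiv 0$, hence $u_1=u_2$ on $[0,T_*)$, and continuity/maximality will give $T_*=T$.

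The hard part will be the time derivative of the derivative-comparison term: commuting $\p_t$ with the parallel transport $P$ and with the pulled-back Levi-Civita connection on $u_1^*TN$ produces Jacobi-field contributions and a tensorial error $R^N(V,\cdot)du_2$, and one has to show that after integration by parts the top-order contributions cancel, leaving only lower-order remainders controlled by $\E$. The K\"ahler identity $\n J=0$ is essential here, since it is what allows the $J\tau$ term to be absorbed into the pairing rather than produce a free $J$. A secondary issue is the non-compactness of $M$: the integrands are only $L^2$ in $x$, so passing $R\to\infty$ requires the cutoff error $\int\<\cdot,\cdot\>\n\phi_R$ to vanish in the limit, which should follow from the $\dot W^{1,\infty}$ bound on $u_i$, the $L^2$ integrability of $V$ and $du_1-P^{-1}du_2$ (coming from the $W^{2,2}$ component of $\S_\infty$), and the bounded Ricci curvature of $M$. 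Finally, justifying the formal computations for solutions in $\S_\infty$ relies on the $\dot W^{2,\infty}$ hypothesis, which provides the pointwise estimates on $\tau(u_i)$ needed to close the estimates without losing derivatives.
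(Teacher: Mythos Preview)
Your plan is correct and is essentially the paper's own approach: the same intrinsic energy $Q(t)=\int_M d_N(u_1,u_2)^2+\int_M|\P\nabla u_2-\nabla u_1|^2$ followed by a Gronwall inequality, with the $\dot W^{2,\infty}$ bound used exactly where you indicate to close without losing a derivative. Two minor remarks: the paper dispenses with your cutoff $\phi_R$ (the $W^{2,2}$ part of $\S_\infty$ already places every integrand in $L^1(M)$ and justifies the integrations by parts directly), and it carries out the computation in moving frames, which makes your phrases ``curvature corrections'' and ``Jacobi-field contributions'' completely explicit via a Hessian bound for $d_N^2$ on $N\times N$ and the formula $B_{\mathbf i}=\nabla_{2,\mathbf i}-\nabla_{1,\mathbf i}=\int_0^1 R^N(\bar\nabla_s U,\bar\nabla_{\mathbf i}U)\,ds$ for the difference of pull-back connections. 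The one technical point you should not skip when you write it out is the estimate of $\bar\nabla_k B_k$: the term $\int_0^1|\bar\nabla_s\bar\nabla_k U|\,ds$ that arises there must be bounded by $|\P\nabla_k u_2-\nabla_k u_1|+C\,d$ (via a comparison of the parallel-transport distance with the Jacobi $L^2$-distance), not by a pointwise quantity, since the naive bound is singular wherever $u_1(t,x)=u_2(t,x)$.
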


\begin{thm}\label{t:main2}
Suppose $m\ge 3$, $M$ is an $m$ dimensional complete manifold with bounded Riemannian curvature $R_M$ and positive injectivity radius $inj(M)>0$, $N$ is a complete K\"ahler manifold with bounded geometry. If $u_1, u_2\in L^\infty([0,T],\S_m)$ are two solution to the \sch flow~(\ref{e:sch0}) with the same initial map $u_0\in \S_m$, then $u_1=u_2$ a.e. on $[0,T]\times M$.
\end{thm}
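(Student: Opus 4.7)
The plan is to follow McGahagan's intrinsic comparison scheme~\cite{Mc}, formulated entirely on the curved domain $M$ via the distance function on $N$ and a moving orthonormal gauge along the connecting geodesics. Set $\rho(x,t):=d_N(u_1(x,t),u_2(x,t))$, which is differentiable wherever $\rho<\mathrm{inj}(N)$, and let $P_{x,t}:T_{u_2(x,t)}N\to T_{u_1(x,t)}N$ denote parallel transport along the unique minimizing geodesic $\gamma_{x,t}$ from $u_2(x,t)$ to $u_1(x,t)$. The key comparison $1$-form is $w := P(du_2)-du_1$, a section of $T^*M\otimes u_1^*TN$. By continuity and the common initial datum, $\rho$ stays below $\mathrm{inj}(N)/2$ on a maximal initial subinterval of $[0,T]$ on which I would run the Gronwall argument to force $\rho\equiv 0$; a standard continuation then extends this to all of $[0,T]$.

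The core step is the coupled evolution of $\rho^2$ and $|w|^2$. From $\p_t u_i=J(u_i)\tau(u_i)$ and the first variation of arclength, $|\p_t \rho^2|$ is pointwise bounded by $|\rho|\,(|du_1|+|du_2|)\,|w|$. The equation for $w$ is more delicate: commuting $P$ with $\p_t$ and with the covariant derivative on $M$ produces curvature terms $R_N$ evaluated on $\rho\cdot V$ (where $V$ is the initial velocity of $\gamma_{x,t}$), together with Hessian terms coming from $\n\tau(u_i)=\mathrm{tr}_M\,\n du_i+R_N(du_i,du_i)du_i+\mathrm{Ric}_M\cdot du_i$. Here the K\"ahler condition $\n J=0$ is crucial: after parallel-transporting one side and taking the difference of the two flow equations, the top-order $\mathrm{tr}_M\,\n du_i$ pieces combine antisymmetrically under $J$ and cancel, leaving a source term quadratic in $w$, $\rho$, $du_i$, $\n du_i$, and the ambient curvatures.

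With the evolution equations in hand, I would multiply by a cutoff $\varphi^2$ supported in a geodesic ball $B_R\subset M$ with $|\n\varphi|\le C/R$, integrate over $M$, and close the estimate. The $\dot{W}^{1,\infty}$ component of $\S_m$ controls the coefficients of $|w|^2$ in the lower-order terms; the $\dot{W}^{2,m}$ component controls $\n du_i$ in $L^m(M)$ and, via H\"older together with the Sobolev embeddings available on bounded-geometry manifolds, handles the cross-terms $\n du_i\cdot w\cdot(\cdot)$; and the $W^{[m/2]+1,2}$ component guarantees $\rho,w\in L^\infty_tL^2_x$ so that the cutoff errors vanish as $R\to\infty$. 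The dimensional restriction $m\ge 3$ enters precisely through the critical embedding $W^{2,m}(M)\hookrightarrow L^\infty(M)$ on bounded-geometry $M$, which is needed to pointwise-bound the $R_N\cdot du\cdot du$ nonlinearity and which degenerates at $m=2$; that case is instead covered by Theorem~\ref{t:main1} thanks to the stronger $\dot{W}^{2,\infty}$ hypothesis. Gronwall's lemma applied to $\int_M(\rho^2+|w|^2)\varphi^2$ then yields $\rho\equiv 0$ and $w\equiv 0$.

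The main obstacle will be twofold. First, justifying the parallel-transport apparatus at the borderline regularity of $\S_m$: the vector field $V=\exp_{u_1}^{-1}(u_2)$ and the transported frame along $\gamma_{x,t}$ are only Sobolev-regular in $x$ and $t$, so every formal differentiation of $w$ must be legitimized by mollifying $u_1,u_2$ on $M$ (using a bounded-geometry partition of unity) and passing to the limit in the integrated estimate using the uniform $\S_m$-bounds. Second, tracking the curvature of $M$: the $\mathrm{Ric}_M$ term in $\tau$ and the commutators $[\n_i,\n_j]$ produce $R_M$-contractions with $du$, and the assumptions of bounded Riemann curvature and positive injectivity radius on $M$, together with the bounded geometry of $N$, are exactly what permit all these curvature error terms to be absorbed into a Gronwall constant uniform in $R$. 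Combining these ingredients yields $u_1=u_2$ almost everywhere on $[0,T]\times M$.
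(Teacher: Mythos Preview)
Your architecture---the energy $\int_M(\rho^2+|w|^2)$, parallel transport along connecting geodesics, Gronwall---matches the paper's, but two of the mechanisms you describe would not go through as written. First, the pointwise bound $|\p_t\rho^2|\le|\rho|(|du_1|+|du_2|)|w|$ is false: the first variation of arclength gives $\p_t\rho^2=2\<\gamma'(0),\,\P\p_t u_2-\p_t u_1\>$, which involves $\tau(u_i)$, not $du_i$. One must integrate by parts over $M$, transferring a spatial derivative onto $\tn d^2$; this produces the \emph{Hessian} $\tn^2 d^2$, and it is an upper bound on that Hessian (Lemma~\ref{l:Hessian}) that yields $\frac{d}{dt}\int\rho^2\le C\int(|w|^2+\rho^2)$. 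Second, in the $w$-evolution the top-order piece $J\Delta_1\psi$ does vanish after integration by parts, but the \emph{difference of Laplacians} $(\Delta_2-\Delta_1)\phi_2$ survives; expanding it via $B:=\n_2-\n_1$ produces a term containing $\int_0^1|\n_s\n_kU|\,ds$ that is not pointwise bounded (the distance is not smooth where $u_1=u_2$) and must be controlled by the Jacobi comparison of Lemma~\ref{l:distance}---this is precisely the gap in McGahagan flagged in Remark~\ref{r:gap}, and your sketch passes over it.

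More seriously, you have misidentified where the restriction $m\ge3$ and the $\dot W^{2,m}$ hypothesis enter. The embedding $W^{2,m}\hookrightarrow L^\infty$ holds in every dimension and is not the point; the cubic term $R^N(du,du)du$ is already handled by $du\in L^\infty$ from the $\dot W^{1,\infty}$ assumption. The $L^m$ bound on $\n^2u_\ld$ is needed because $(\Delta_2-\Delta_1)\phi_2$ contributes a piece of size $|\n^2u_\ld|\cdot\rho$, whose $L^2$ norm one estimates by H\"older as $\|\n^2u\|_{L^m}\|\rho\|_{L^{2m/(m-2)}}$; it is then the critical Sobolev embedding
\[\|\rho\|_{L^{2m/(m-2)}}\le C\|\rho\|_{W^{1,2}}\le C(\|\rho\|_{L^2}+\|w\|_{L^2}),\]
valid only for $m\ge3$, that closes the Gronwall loop. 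With the second-derivative term placed in the wrong spot, your scheme as described would not close. (The cutoff $\varphi^2$ and the mollification are also unnecessary: the $\S_m$ hypotheses already put $\rho$ and $w$ in $L^2(M)$ globally.)
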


\begin{rem}
For a complete Riemannian manifold $N$ with bounded geometry, the above spaces $\S_\infty$ and $\S_m$ can by defined equivalently with or without referring to a embedding $N\hookrightarrow \Real^K$. Namely, we may define the Sobolev space of maps from $M$ to $N$ intrinsically by the covariant derivatives induced from the Levi-Civita connections on $M$ and $N$. Note that the index $[\frac{m+3}{2}]$ equals $[m/2]+1$ when $m$ is even and equals $[m/2]+2$ when $m$ is odd, thus the space $\S_m$ is larger than the space $\S'_m$ in McGahagan~\cite{Mc}.
\end{rem}

\begin{rem}
The assumptions on $M$ in Theorem~\ref{t:main2} is made to ensure the validity of Sobolev inequalities, in particular the Gagliardo-Nirenberg interpolation inequalities(cf. \cite{C,H}). Otherwise, we need the $L^\infty$ bound of the second derivatives of the solutions as in Theorem~\ref{t:main1}.
\end{rem}

The two theorems are proved simultaneously in Section~\ref{s:proof} and the proof is based on a geometric energy method. Given two solutions $u_1$ and $u_2$, we will define an energy functional which describes their difference up to the first-order derivatives as follows:
\[ Q(t):=\int_{\{t\}\times M}|d_N(u_1,u_2)|^2dv + \int_{\{t\}\times M}|\P \n u_2-\n u_1|^2dv, ~~t\in[0,T].\]
The functional consists of two parts. The first part is simply the integral of the distance of $u_1$ and $u_2$ on $N$. The second part is defined by the intrinsic distance of the differentials $\n u_1$ and $\n u_2$. The key step here is to construct a global isomorphism $\P$ between the two pull-back bundles $u_1^*TN$ and $u_2^*TN$ by using parallel transportation in $N$. The existence of $\P$ is guaranteed by the assumptions of the theorems. Then our goal is to show that this functional satisfies a Gronwall type inequality and hence vanishes identically on $[0,T]$.

The key innovation of McGahagan~\cite{Mc} is to compare $\n u_1$ and $\n u_2$ intrinsically via parallel transportation on $N$. While for the zeroth-order term, she still use the embedding $N \hookrightarrow \Real^K$ and the extrinsic distance $|u_1-u_2|_{\Real^K}$. Here we go one step further and use the intrinsic distance $d_N(u_1,u_2)$ instead. In this way the functional $Q$ is defined intrinsically. Actually, for Theorem~\ref{t:main1}, we provide a purely intrinsic proof.

One advantage of our method is that the derivatives of $d_N(u_1,u_2)$ is naturally connected with the first order term $\P \n u_2-\n u_1$. Correspondingly, the cost is that we need an estimate of the Hessian of the distance function, which appears in many other uniqueness problems in geometric analysis. It is interesting that, different from the uniqueness arguments of harmonic maps~\cite{JK,SY,CJW} and other parabolic geometric flows~\cite{CY,CZ},  we need an upper bound of the Hessian instead of a lower bound.


Another feature of our presentation is that we use the method of moving frames and gauge language to illustrate the geometric ideas more clearly. For example, we give an explicit expression of the difference of pull-back connections and corresponding Laplacian operators.

The energy method can also be used in proving uniqueness of other types of geometric flows. For example, Kotschwar applied the energy method to prove the uniqueness of Ricci flow~\cite{K1,K2}. Part of our motivation of the current work arises from our study of another \sch type geometric flow, namely, the Skew Mean Curvature Flow(SMCF)~\cite{SS}. The Gauss map of SMCF satisfies a coupled system consisting of the \sch flow and a metric flow, where the metric on the domain manifold of the \sch map evolves along time~\cite{S}. The uniqueness of SMCF is still open and our method here provides a possible solution to the more challenging problem.

\section*{Acknowledgement}

Part of this work was carried out when the first author was visiting University of British Columbia and University of Washington. He would like to thank Professor Jingyi Chen and Professor Yu Yuan for their generous help and support.

\section{Preliminaries}

\subsection{\sch Flow in Moving Frame}

Let $T>0$ and $I=[0,T]$ be an interval. Suppose $u:I\times M\to N$ is a solution to the \sch flow
\begin{equation}\label{e:sch}
  \p_tu=J(u)\tau(u).
\end{equation}
We are going to rewrite the above equation in a moving frame, namely, a chosen gauge of the pull-back bundle $u^*TN$.

To fix our notations, we let roman numbers $i,j,k$ be indices ranging from $1$ to $m$, bold ones $\ii, \jj, \kk$ ranging from $0$ to $m$, and Greek letters $\al,\be$ ranging from $1$ to $n$, where $n$ is the dimension of $N$. Let $\b{M}:=I\times M$ be endowed with the natural product metric. We will use $\n$ to denote connections on different vector bundles which are naturally induced by the Levi-Civita connections on $M$ and $N$. In particular, this includes the pull-back bundle $u^*TN$ on $\b{M}$, the pull-back bundle $u(t)^*TN$ on some time slice $\{t\}\times M$ for $t\in I$ and their tensor product bundles with the cotangent bundle $T^*M$. Sometimes in the context, we also use more specific notations such as $\n^N$ and $\n^M$ to emphasize which connection we are using.

Locally on an open geodesic ball $U\subset M$, we may choose an orthonormal frame $\{e_i\}_{i=1}^m$ of the tangent bundle $TM$. Set $e_0:=\p_t$ such that $\{e_\ii\}_{\ii=0}^m$ forms a local orthonormal basis of $T(I\times U)$. For convenience, we denote $\n_\ii:=\n_{e_\ii}$ and $\n_t=\n_0$. Then $\n_t e_\ii=0, 0\le \ii\le m$ with $\n$ the Levi-Civita connection on $\b{M}$.

Recall that the tension field is $\tau(u)=\tr_g \n^2u = \n_k\n_k u$, where $\n_k u$ denotes the covariant derivative of $u$ and is a section of the bundle $u^*TN\otimes T^*M$. Then the \sch flow (\ref{e:sch}) has the form
\begin{equation*}
  \n_tu=J(u)\n_k\n_ku.
\end{equation*}
Differentiating the equation, we get
\begin{equation*}
  \begin{aligned}
    \n_t\n_i u&=\n_i\n_t u\\
    &=\n_i(J(u)\n_k\n_k u)=J(u)\n_i\n_k\n_ku\\
    &=J(u)(\n_k\n_i\n_k u + R^N(\n_i u, \n_k u)\n_k u + R^M(e_i, e_k, e_k, e_l)\n_l u)\\
    &=J(u)(\n_k\n_k\n_i u + R^N(\n_i u, \n_k u)\n_k u + Ric^M(e_i, e_l)\n_l u),
  \end{aligned}
\end{equation*}
where $R^M, R^N$ are the curvature of $M$ and $N$, respectively, and $Ric^M$ is the Ricci curvature of $M$. Here we have used the fact that $\n$ is torsion free and $\n^N J=0$ since the target manifold $N$ is K\"ahler.

Next we choose a local frame $\{f_\al\}_{\al=1}^{n}$ of the pull-back bundle $u^*TN$, such that the complex structure $J$ in this frame is reduced to a constant skew-symmetric matrix which we denote by $J_0$. Letting $\n_\ii u=:\phi_\ii^\al f_\al$, we may further rewrite the above equation for $\n_i u$ as
\begin{equation}\label{e:sch1}
  \n_t\phi_i=J_0(\Delta_x\phi_i+R^N(\phi_i, \phi_k)\phi_k + Ric^M_{ij}\phi_j),
\end{equation}
where $\Delta_x=\n_k\n_k$ is the Laplacian operator on $u(t)^*TN\otimes T^*M$.


\subsection{Pseudo distance of tangent vectors}\label{s:distance}

In the following context, we always assume $N$ is a complete manifold with curvature bounded by $K_0$ and injectivity radius bounded from below by $i_0>0$. Let $\de_0=\min\{\frac{i_0}{2},\frac{1}{4\sqrt{K_0}}\}$ and $D\subset N$ be an open ball with radius $\de_0$. Then for any $y_1, y_2\in D$, there exists a unique minimizing geodesic $\ga:[0,1]\to D$ connecting $y_1$ and $y_2$. Let $\P:T_{y_2}N \to T_{y_1}N$ be the linear map given by parallel transportation along $\ga$.

For two vectors $X_\ld\in T_{y_\ld}N, \ld=1,2$, there is a natural distance function defined by
\[\mathfrak{d}_0(X_1, X_2):=|\P X_2-X_1|.\]
On the other hand, we can find a Jacobi field $\b{X}$ along $\ga$ such that $\b{X}(0)=X_1$ and $\b{X}(1)=X_2$. There is another distance function given by (cf. \cite{JK})
\[\mathfrak{d}(X_1, X_2):=\left\{\begin{aligned}
  &\(\int_0^1|\n_s \b{X}|^2ds\)^{\frac12}, \quad y_1\neq y_2;\\
  &|X_1-X_2|, \quad y_1=y_2.
\end{aligned}.\right.\]

It turns out that the two distance functions are in a sense equivalent. Here we quote the following lemma of Chen, Jost and Wang~\cite{CJW}.

\begin{lem}\label{l:distance}
There exists a constant $C$ depending on the geometry of $N$ such that
\[ |\mathfrak{d}_0(X_1, X_2)-\mathfrak{d}(X_1, X_2)|\le C(|X_1|+|X_2|)d(y_1,y_2),\]
where $d$ is the distance function of $N$.
\end{lem}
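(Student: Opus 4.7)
The plan is to realize both $\mathfrak{d}_0(X_1,X_2)$ and $\mathfrak{d}(X_1,X_2)$ as $L^2$-norms along $\ga$ of covariant derivatives of natural vector fields interpolating $X_1$ and $X_2$, then to control their difference by the $L^2$-norm of the derivative of the difference of the two interpolants. Parameterize the unique minimizing geodesic as $\ga:[0,1]\to N$ with $|\ga'|\equiv L := d(y_1, y_2) \le \de_0$, and let $P_s:T_{y_1}N \to T_{\ga(s)}N$ denote parallel transport along $\ga$. Set $W(s) := P_s\big((1-s)X_1 + s\,\P X_2\big)$, the parallel transport of the straight line in $T_{y_1}N$ joining $X_1$ and $\P X_2$. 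Then $\n_s W = P_s(\P X_2 - X_1)$ is parallel along $\ga$ with constant norm $\mathfrak{d}_0$, so $\mathfrak{d}_0^2 = \int_0^1 |\n_s W|^2\,ds$. Since also $\mathfrak{d}^2 = \int_0^1 |\n_s \b{X}|^2\,ds$ by definition, both distances are now $L^2$-norms of covariant derivatives of vector fields along $\ga$ sharing the same endpoint values.

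The difference $Z := \b{X} - W$ satisfies $Z(0) = Z(1) = 0$ together with the inhomogeneous linear ODE $\n_s^2 Z = -R^N(\b{X},\ga')\ga'$, because $\n_s^2 W = 0$ while $\n_s^2 \b{X} = -R^N(\b{X},\ga')\ga'$. Trivializing $\ga^\ast TN$ via $P_s$ converts this into a Dirichlet problem for a vector-valued ODE $\tilde Z''(s) = -\tilde g(s)$ on $T_{y_1}N$, with $|\tilde g(s)| \le K_0 L^2 \sup_{[0,1]}|\b{X}|$. The explicit bounded Green's function for $-\tfrac{d^2}{ds^2}$ on $[0,1]$ with zero Dirichlet data then yields
\[ \sup_{[0,1]}|\n_s Z| \le C K_0 L^2 \sup_{[0,1]}|\b{X}|. \]

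The remaining ingredient is a short-geodesic Jacobi field bound $\sup_{[0,1]}|\b{X}|\le C(|X_1|+|X_2|)$, which follows from standard Rauch-type comparison on an interval of length $L\le (4\sqrt{K_0})^{-1}$: decomposing $\b{X}$ into components tangential and normal to $\ga$, the tangential part is affine in $s$ and hence bounded directly by the endpoint values, while the normal part satisfies a Jacobi equation whose solutions are comparable to linear interpolants on such a short interval. With this in hand, the reverse triangle inequality in $L^2(ds)$ gives
\[ |\mathfrak{d}_0 - \mathfrak{d}| = \big|\Norm{\n_s W}_{L^2}-\Norm{\n_s \b{X}}_{L^2}\big| \le \Norm{\n_s Z}_{L^2} \le C L^2(|X_1|+|X_2|), \]
and since $L\le \de_0$ we absorb one factor of $L$ to obtain the stated inequality. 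The main technical obstacle is the Jacobi field sup-bound controlling $\sup|\b{X}|$ by $|X_1|+|X_2|$; everything else is a routine application of the Green's function estimate and the reverse triangle inequality once that bound is in hand.
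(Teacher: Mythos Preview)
The paper does not supply its own proof of this lemma; it simply quotes the result from Chen--Jost--Wang~\cite{CJW}. Your argument is a correct, self-contained proof. The strategy of writing both $\mathfrak{d}_0$ and $\mathfrak{d}$ as $L^2$-norms of covariant derivatives along $\ga$, then bounding the difference via the auxiliary field $Z=\b{X}-W$ solving an inhomogeneous Dirichlet problem with source controlled by $K_0 L^2\sup|\b{X}|$, is clean and yields the slightly sharper bound $C L^2(|X_1|+|X_2|)$ before you absorb one factor of $L$. The Jacobi sup-bound $\sup_{[0,1]}|\b{X}|\le C(|X_1|+|X_2|)$ that you flag as the main technical ingredient is exactly the first estimate of Lemma~\ref{l:Jacobi} in the paper, so within this paper's framework that step is already available and needs no further justification.
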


\subsection{Hessian of distance function}

The distance function $d$ of $N$ can be regarded as a function defined on $N\times N$. It is well-known that its square $d^2$ is smooth when restricted to $D\times D$. Let $\tn:=\n\oplus\n$ be the covariant derivative on $N\times N$ induced by the Levi-Civita connection $\n$ on $N$.

\begin{lem}\label{l:Hessian}
Let $\t{X}=(X_1,X_2), \t{Y}=(Y_1, Y_2)$ be two vectors in $T_{y_1}N\times T_{y_2}N$, then
  \begin{align*}
    \frac{1}{2}\tn d^2(\t{X})&=\<\ga'(0),\P X_2-X_1\>,\\
    \frac{1}{2}|\tn^2d^2(\t{X},\t{Y})|&\le |\P X_2-X_1||\P Y_2-Y_1|+Cd^2(|X_1|+|X_2|)(|Y_1|+|Y_2|).
  \end{align*}
\end{lem}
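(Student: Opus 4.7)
My plan is to derive both formulas from the first and second variation of the energy functional of the unique minimizing geodesic $\ga:[0,1]\to D$ joining $y_1=\ga(0)$ and $y_2=\ga(1)$, since $\tfrac12 d^2(y_1,y_2)$ equals the energy $E(\ga) = \tfrac12 \int_0^1 |\ga'|^2\,dt$. For the first identity, consider a one-parameter variation $\ga_s$ through geodesics whose endpoints move with initial velocities $X_1$ at $y_1$ and $X_2$ at $y_2$. Since $\ga$ is itself a geodesic, the first variation of $E$ reduces to the boundary terms
\[\frac{d}{ds}\bigg|_{s=0} E(\ga_s) = \<\ga'(1), X_2\> - \<\ga'(0), X_1\>.\]
Because $\ga'$ is parallel along $\ga$, we have $\P \ga'(1) = \ga'(0)$, whence $\<\ga'(1), X_2\> = \<\ga'(0), \P X_2\>$, and the first identity follows.

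For the Hessian, I take a two-parameter family of geodesics $\ga_{s,r}$ whose endpoints $y_i(s,r)$ have first derivatives $\t{X}, \t{Y}$ at $(0,0)$, chosen so that the mixed covariant derivatives $\n_s \p_r y_i$ vanish at the base point to eliminate boundary corrections. The variation fields $J_X=\p_s\ga|_{(0,0)}$ and $J_Y=\p_r\ga|_{(0,0)}$ are then Jacobi fields along $\ga$ with $J_X(0)=X_1, J_X(1)=X_2$ (and similarly for $J_Y$). A direct computation of $\p_s\p_r E$ using the Jacobi equation and curvature identities yields the index form
\[ \tn^2\!\left(\tfrac12 d^2\right)(\t{X},\t{Y}) = \int_0^1 \<\n_t J_X, \n_t J_Y\>\,dt - \int_0^1 \<R^N(J_X,\ga')\ga', J_Y\>\,dt. \]
The curvature term is bounded immediately by $CK_0 d^2 \sup|J_X|\sup|J_Y| \le Cd^2(|X_1|+|X_2|)(|Y_1|+|Y_2|)$, using $|\ga'|=d$ and the standard Jacobi field bound $\sup_{[0,1]}|J_X| \le C(|X_1|+|X_2|)$ on a short geodesic.

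The main step is to show that the first integral equals $\<\P X_2-X_1, \P Y_2-Y_1\> + O(d^2(|X_1|+|X_2|)(|Y_1|+|Y_2|))$. For this I would expand the Jacobi equation $\n_t^2 J_X + R^N(J_X, \ga')\ga' = 0$ in a parallel orthonormal frame $\{e_\al(t)\}$ along $\ga$. Since $\ga' = O(d)$, the curvature coefficient is $O(d^2)$, and standard ODE perturbation shows that $J_X$ differs from its flat linear interpolant by $O(d^2(|X_1|+|X_2|))$ in $C^1$. Consequently $\n_t J_X$ coincides with the parallel transport of $\P X_2-X_1 \in T_{y_1}N$ along $\ga$, up to an error of the same order. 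Combining the analogous expansion of $\n_t J_Y$ and integrating produces the claimed leading behavior, and the Hessian bound follows by combining the two integrals.

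The principal obstacle is the sharp Jacobi field expansion: one obtains an error of order $d$ directly by Cauchy-Schwarz combined with Lemma~\ref{l:distance}, but pushing this to $d^2$ as required by the lemma statement relies on the observation that the curvature term in the Jacobi equation along a short geodesic is $O(d^2)$ rather than $O(1)$. Everything else is routine manipulation of the variation formulas and standard curvature bounds.
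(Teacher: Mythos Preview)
Your approach is correct and close in spirit to the paper's, but organized differently in one notable way. The paper does not compute the second variation of the energy $\tfrac12 d^2$ directly; instead it uses the product rule
\[
\tfrac12\tn^2 d^2(\t X,\t Y)=\tn d(\t X)\,\tn d(\t Y)+d\,\tn^2 d(\t X,\t Y)
\]
and then invokes the classical second variation formula for the \emph{length} functional, which involves only the components $\bar X^\perp,\bar Y^\perp$ perpendicular to $\gamma'$. In that decomposition the tangential contribution $\langle T_1,\P X_2-X_1\rangle\langle T_1,\P Y_2-Y_1\rangle$ is captured exactly by the first term, and the remaining integral $\int_0^1\langle\n_s\bar X^\perp,\n_s\bar Y^\perp\rangle\,ds$ is bounded by citing Lemma~\ref{l:distance} of Chen--Jost--Wang. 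Your route keeps the full Jacobi fields $J_X,J_Y$ in the index form and instead appeals to the $C^1$ perturbation estimate coming from the $O(d^2)$ curvature term in the Jacobi ODE. Both arguments rest on the same underlying fact---that $\n_t J_X$ coincides with the parallel transport of $\P X_2-X_1$ up to $O(d^2)$---so they are really two packagings of the same computation: the paper outsources it to Lemma~\ref{l:distance}, while you rederive its content in place.

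Your final remark about the $d$ versus $d^2$ error is well taken. A naive application of Cauchy--Schwarz together with the bound $|\mathfrak d-\mathfrak d_0|\le C(|X_1|+|X_2|)d$ from Lemma~\ref{l:distance} produces cross terms of order $d$ rather than $d^2$; getting the sharper $d^2$ genuinely requires the pointwise ODE expansion you describe, so your identification of this as the main technical point is accurate.
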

\begin{proof}
Let $\b{T}:=\ga'/d$ denote the unit tangent vector along $\ga$ and $T_1=\b{T}(0), T_2=\b{T}(1)$. Since $\b{T}$ is parallel along $\ga$, we have $\P(T_2)=T_1$. By the formula of gradient of distance function, we have
\[\begin{aligned}
  \tn d(\t{X})&=\<-T_1, X_1\>+\<T_2, X_2\>\\
  &=\<T_1,-X_1\>+\<\P T_2, \P X_2\>\\
  &=\<T_1,\P X_2-X_1\>.
\end{aligned}\]
This proves the first identity of the lemma.

For the Hessian estimate, we have
\[\frac{1}{2}\tn^2d^2(\t{X},\t{Y})=\tn d(\t{X})\cdot\tn d(\t{Y})+d\tn^2d(\t{X},\t{Y}).\]
Let $\b{X}$ be the Jacobi field along $\ga$ with $\b{X}(0)=X_1$ and $\b{X}(1)=X_2$. Similarly, Let $\b{Y}$ be the Jacobi field along $\ga$ with $\b{Y}(0)=Y_1$ and $\b{Y}(1)=Y_2$. Recall the second variational formula of distance function (see for example Theorem 5.4 of \cite{KN})
\[
\tn^2d(\t{X},\t{Y})=\frac{1}{d}\(\int_0^1\<\n_{s}\b{X}^\bot, \n_{s}\b{Y}^\bot\>ds-\int_0^1 \<R(\ga',\b{X}^\bot)\b{Y}^\bot,\ga'\>ds\),
\]
where $\b{X}^\bot, \b{Y}^\bot$ is the component of $\b{X}, \b{Y}$ perpendicular to $\b{T}$. It follows that
\[\begin{aligned}
\frac{1}{2}\tn^2d^2(\t{X},\t{Y})&= \<T_1,\P X_2-X_1\>\<T_1, \P Y_2-Y_1\>\\
  &\quad +\int_0^1\<\n_{s}\b{X}^\bot, \n_{s}\b{Y}^\bot\>ds-d^2\int_0^1 \<R(\b{T},\b{X}^\bot)\b{Y}^\bot,\b{T}\>ds.
  \end{aligned}\]
By Lemma~\ref{l:distance}, the second term in the right hand side can be bounded by
\[\left|\int_0^1\<\n_{s}\b{X}^\bot, \n_{s}\b{Y}^\bot\>ds\right|\le |\P X_2^\bot-X_1^\bot||\P Y_2^\bot-Y_1^\bot|+Cd^2(|X_1|+|X_2|)(|Y_1|+|Y_2|).\]
Moreover, by the equation for the Jacobi field, it is easy to see that (see for example the proof of Lemma~\ref{l:Jacobi} below)
\[|\b{X}|\le C(|X_1|+|X_2|)\]
and
\[|\b{Y}|\le C(|Y_1|+|Y_2|).\]
Thus we have
\[\left|\int_0^1 \<R(\b{T},\b{X}^\bot)\b{Y}^\bot,\b{T}\>ds\right|\le C(|X_1|+|X_2|)(|Y_1|+|Y_2|).\]
Combining the above inequalities together, we get the second identity and hence finish the proof of the lemma.
\end{proof}

\section{Proof of Uniqueness}\label{s:proof}

\subsection{Outline of the proof}

In this section we prove Theorem~\ref{t:main1} and Theorem~\ref{t:main2} simultaneously. For $\S=\S_\infty$ or $\S=\S_m$, let $u_1, u_2\in L^\infty([0,T), \S)$ be two solutions to the \sch flow~(\ref{e:sch0}) with same initial value $u_0\in \S$. We need to show that $u_1=u_2$ a.e. for all $(t,x)\in [0,T)\times M$.


The first step is to construct a family of geodesics connecting the two solutions and hence a globally defined parallel transportation. To this order, we show that in a sufficiently small time interval $I:=[0,T']$, the two solutions lie sufficiently close to each other, such that there exists an unique geodesic connecting $u_1$ and $u_2$ for each $(t,x)\in I\times M$. More precisely, we define a map $U:[0,1]\times I\times M\to N$ such that $U(0,t,x)=u_1(t,x)$, $U(1,t,x)=u_2(t,x)$ and $\ga_{(t,x)}(s):=U(s,t,x):[0,1]\to N$ is a geodesic for any fixed $(t,x)\in I\times M$. Thus we can define a linear map $\P:u_2^*TN\to u_1^*TN$ between the two pull-back bundles by using parallel transportations along the geodesics.

Next we define two functions
\[Q_1(t):=\int_{M}|d(u_1,u_2)|^2dv,\]
and
\[Q_2(t):=\int_{M}|\P \n u_2-\n u_1|^2dv.\]
Our goal is to derive a Gronwall type estimate for the energy $Q_1(t)+Q_2(t)$ and conclude that $Q_1(t)=Q_2(t)=0$ for all $t$. Two estimates will play an important role in the computation. The first is the estimate of the Hessian of distance function. The second one is the estimate of difference of pull-back connections corresponding to the two solutions.

\subsection{Construction of connecting geodesics}

First we need the following lemma.

\begin{lem}\label{l1}
Under the assumptions of Theorem~\ref{t:main1} or Theorem~\ref{t:main2}, there exists $T'>0$ such that $d(u_1,u_2)<\de_0$ for any $(t,x)\in [0,T']\times M$.
\end{lem}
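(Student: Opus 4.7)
The plan is to reduce everything to showing that each $u_i(t,\cdot)$ stays uniformly close to the common initial map $u_0$ for small $t$, since then the triangle inequality gives $d(u_1(t,x), u_2(t,x)) \leq d(u_1(t,x), u_0(x)) + d(u_0(x), u_2(t,x)) < \delta_0$ at every $x \in M$. The starting point is the \sch flow equation, which, together with the fact that $J$ acts as a pointwise isometry on $TN$, gives $|\partial_t u_i(s,x)| = |\tau(u_i)(s,x)|$; integrating along $s \in [0,t]$ then yields the pointwise bound
\[
d(u_i(t,x), u_0(x)) \leq \int_0^t |\tau(u_i)(s,x)|\,ds.
\]
The whole question then becomes how to control $\tau(u_i)$ in a norm strong enough to make the right-hand side uniformly small in $x$ as $t \to 0$.

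For Theorem~\ref{t:main1}, the assumption $u_i \in L^\infty([0,T], \dot W^{2,\infty})$ combined with the bounded geometry of $N$ immediately gives $\|\tau(u_i)\|_{L^\infty([0,T] \times M)} \leq C$, so the displayed inequality yields $d(u_i(t,x), u_0(x)) \leq Ct$ pointwise and $T' := \delta_0/(2C)$ suffices. The more delicate case is Theorem~\ref{t:main2}: using the extrinsic formula $\tau(u) = \Delta u - A(u)(\n u, \n u)$ via the isometric embedding $N \hookrightarrow \Real^K$, the first term is controlled in $L^m$ by $\|\n^2 u_i\|_{L^m}$, while the quadratic term lies in $L^m$ since an interpolation between $\dot W^{1,\infty}$ and $W^{1,2}$ puts $|\n u_i|^2$ in $L^m$. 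Hence $\|\tau(u_i)\|_{L^\infty_t L^m_x} \leq C$, and the extrinsic form of the displayed inequality combined with Minkowski's inequality in time gives $\|u_i(t) - u_0\|_{L^m(M,\Real^K)} \leq Ct$.

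To upgrade this $L^m$-smallness to uniform $L^\infty$-smallness, I would invoke the Gagliardo--Nirenberg interpolation inequality
\[
\|v\|_{L^\infty(M)} \leq C \|v\|_{L^m(M)}^{1-\theta} \|\n^{[m/2]+1} v\|_{L^2(M)}^{\theta}
\]
with $v = u_i(t) - u_0$, where $\theta \in (0,1)$ is determined by scaling. Since the $W^{[m/2]+1,2}$-norm of $u_i(t) - u_0$ is uniformly bounded by the hypothesis, this produces $\|u_i(t) - u_0\|_{L^\infty(M,\Real^K)} \leq C t^{1-\theta}$, and the local bi-Lipschitz equivalence between $|\cdot|_{\Real^K}$ and $d_N$ on balls of radius $\delta_0$ then yields $\sup_x d(u_i(t,x), u_0(x)) \to 0$ as $t \to 0$, from which a sufficiently small $T'$ finishes the job. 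The main obstacle is exactly this last step: one needs the Gagliardo--Nirenberg interpolation to hold on the possibly non-compact manifold $M$, which is precisely why the bounded Riemannian curvature and positive injectivity radius assumptions appear in Theorem~\ref{t:main2}, as noted in the subsequent remark.
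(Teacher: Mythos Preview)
Your proof is correct and follows the same overall strategy as the paper: control $d(u_i(t,\cdot),u_0)$ uniformly for small $t$ and conclude by the triangle inequality. For Theorem~\ref{t:main1} your argument is identical to the paper's. For Theorem~\ref{t:main2} there is a minor but genuine variation: the paper bounds $\|\tau(u_i)\|_{L^2}$ directly from the $W^{2,2}$ hypothesis, obtains $\|u_i(t)-u_0\|_{L^2}\le Ct^{1/2}$ from the energy inequality $\tfrac{d}{dt}\|u_i-u_0\|_{L^2}^2\le C$, and then interpolates between $L^2$ and $W^{[m/2]+1,2}$; you instead bound $\|\tau(u_i)\|_{L^m}$ using the $\dot W^{2,m}$ hypothesis (together with an interpolation for the quadratic second-fundamental-form term), get $\|u_i(t)-u_0\|_{L^m}\le Ct$ via Minkowski, and interpolate from there. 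Both routes end at the same Gagliardo--Nirenberg step and both rely on the bounded-geometry assumptions on $M$ for its validity, so neither buys anything essential over the other.

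One point the paper handles more carefully than you do is the passage from extrinsic to intrinsic distance: smallness of $|u_i(t,x)-u_0(x)|_{\Real^K}$ does not by itself force smallness of $d_N(u_i(t,x),u_0(x))$ unless one knows $u_i(t,x)$ lies in the \emph{connected} component of $N$ intersected with a small Euclidean ball around $u_0(x)$. The paper secures this by first observing that $u_i\in C^0([0,T]\times M,N)$ via Sobolev embedding, so that for each fixed $x$ the curve $t\mapsto u_i(t,x)$ remains in a geodesic ball of $N$ around $u_0(x)$. Your ``local bi-Lipschitz equivalence'' remark implicitly requires the same continuity observation; it would be worth stating it.
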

\begin{proof}
To prove the lemma, we only need to show that for both $\ld=1$ and $2$, $u_\ld(t,x)$ stays close to $u_0(x)$ for fixed $x\in M$ and sufficiently small $t>0$.

If $u_\ld$ satisfies the assumptions of Theorem~\ref{t:main1}, then $\tau(u_\ld) \in L^\infty([0,T]\times M))$. In this case, we can simply bound the distance of $u_0(x)=u_\ld(0,x)$ and $u_\ld(t,x)$ by the length of the curve $\ga_\ld(\cdot)=u_\ld(\cdot, x)$. In fact, from the equation $\p_tu_\ld=J(u_\ld)\tau(u_\ld)$, we deduce
\[d(u_\ld(t,x), u_0(x)\le \int_0^t|\p_t u_\ld|dt\le t\norm{\tau(u)}_{L^\infty}\le Ct.\]
Thus the lemma holds for Theorem~\ref{t:main1}.

For the case of Theorem~\ref{t:main2}, we need to embed the target manifold $N$ into an Euclidean space. By the \sch flow equation, we have
\[ \frac{1}{2}\frac{d}{dt}\norm{u_\ld(t,x)-u_0(x)}^2_{L^2}
=\int_M\<u_\ld-u_0, \p_t u_\ld\> dv
\le C\norm{u_\ld-u_0}_{L^2}\norm{\tau(u_\ld)}_{L^2}
\le C.\]
Since $u_\ld(0,x)=u_0(x)$, it follows
\[\norm{u_\ld-u_0}_{L^2}\le Ct^{1/2}.\]
The assumptions on $M$ allows us to apply the Gagliardo-Nirenberg interpolation inequality (Theorem 5 in \cite{C}) to get
\[ \norm{u_\ld-u_0}_{L^\infty}\le C\norm{u_\ld-u_0}_{L^2}^a\norm{u_\ld-u_0}_{W^{[m/2]+1,2}}^{1-a}\le Ct^{a/2},\]
where $0<a=1-\frac{m}{2([m/2]+1)}<1$.

Note that the above bound only gives an estimate of the extrinsic distance of $u_\ld$ and $u_0$. However, since $u_\ld \in L^\infty([0,T],W^{[m/2]+1,2})$ and $\p_t u_\ld \in L^\infty([0,T],W^{[m/2]-1,2})$, by Sobolev embedding and interpolation inequalities, we know that $u_\ld$ actually belongs to $C^0([0,T]\times M,N)$. Thus for sufficiently small $T'>0$ and fixed $x\in M$, the curve $u_\ld(\cdot,x)$ lies in a connected neighborhood of $u_0(x)$ in $M$ which locates inside a small ball of the extrinsic Euclidean space. Since $N$ has bounded geometry, it follows
\[d(u_\ld, u_0)\le C\norm{u_\ld-u_0}_{C^0}=C\norm{u_\ld-u_0}_{L^\infty}\le Ct^{a/2},\]
Consequently, the lemma also holds for Theorem~\ref{t:main2}.
\end{proof}

An important fact is that the uniqueness is a local property. Namely, once we know $u_1=u_2$ on a small time interval $[0,T']$, then we can prove $u_1=u_2$ on the whole interval $[0,T]$ by repeating the argument. Therefore, we only need to prove Theorem~\ref{t:main1} and \ref{t:main2} in the time interval $I=[0,T']$.

Now by Lemma~\ref{l1}, for any $(t,x)\in I\times M$, there exists a unique minimizing geodesic $\ga_{(t,x)}:[0,1]\to N$ such that $\ga_{(t,x)}(0)=u_1(t,x)$ and $\ga_{(t,x)}(1)=u_2(t,x)$. By letting $(t,x)$ vary, the family of geodesics give rise to a map $U:[0,1]\times I\times M\to N$ connecting $u_1$ and $u_2$, where $U(s,t,x)=\ga_{(t,x)}(s)$. Therefore, we can define a global bundle morphism $\P:u_2^*TN\to u_1^*TN$ by the parallel transportation along each geodesic. Moreover, $\P$ can be extended naturally to a bundle morphism from $u_2^*TN\otimes T^*M$ to $u_1^*TN\otimes T^*M$.

\subsection{Estimate of $Q_1$}

Now consider the composition of the distance function $d:N\times N\to \Real$ and $\t{u}:=(u_1,u_2):I\times M\to N\times N$. Let $\t{X}=(\n u_1, \n u_2)$ and $\t{Y}=(J\n u_1, J\n u_2)$. Using the \sch flow equation and integrating by parts, we have
\[\begin{aligned}
  \frac{d}{dt}Q_1&=\frac{d}{dt}\int_M|d(u_1,u_2)|^2dv\\
  &=\int_M \<\tn d^2, (\p_t u_1,\p_t u_2)\>dv\\
  &=\int_M \<\tn d^2, (J\tau(u_1), J \tau(u_2))\>dv\\
  &=-\int_M \<\n \tn d^2, (J \n u_1, J \n  u_2)\>dv\\
  &=-\int_M \tn^2 d^2(\t{X}, \t{Y}) dv.
\end{aligned}\]
Applying Lemma~\ref{l:Hessian}, we have
\[\begin{aligned}
  \frac{1}{2} |\tn^2 d^2(\t{X}, \t{Y})|
  &\le  |\P X_2-X_1||\P Y_2-Y_1|+Cd^2(|X_1|+|X_2|)(|Y_1|+|Y_2|)\\
  &= |\P \n u_2-\n u_1||\P J\n u_2-J\n u_1|+Cd^2(|\n u_1|+|\n u_2|)(|J\n u_1|+|J\n u_2|)\\
  &\le  |\P \n u_2-\n u_1|^2+Cd^2.
\end{aligned}\]
Therefore, we arrive at
\begin{equation}\label{e:Q1}
  \frac12\frac{d}{dt}Q_1\le Q_2+CQ_1,
\end{equation}
where the constant $C$ depends on $L^\infty$ norm of $\n u_1$ and $\n u_2$.

\subsection{Estimate of $Q_2$}

Next we derive estimates for the functional
\[Q_2=\int_M  |\P \n u_2-\n u_1|^2 dv.\]
To proceed, we express the bundle morphism $\P$ more explicitly by choosing local orthonormal frames of the pull back bundles. In particular, we can arrange the frame to be parallel along the connecting geodesics which is constructed in the previous section.

More precisely, we first fix a local orthonormal frame on $u_1^*TN$. For each point $(t,x)$, we parallel transport the frame to get a moving frame $\{\b{f}_{\al}(s)\}$ along the geodesic $\ga_{(t,x)}(s)$. Then we set $f_{1,\al}=\b{f}_{\al}(0), f_{2,\al}=\b{f}_{\al}(1)$. Obviously, by the construction, we have $\P f_{2,\al}=f_{1,\al}$. If we denote $\n_i u_\ld =\phi_{\ld,i}^\al f_{\ld,\al}$, it follows
\[ \P\n_i u_2= \P(\phi_{2,i}^\al f_{2,\al})=\phi_{2,i}^\al f_{1,\al}.\]
Thus, letting $\phi_\ld:=\phi_{\ld, i}^\al$ and $\psi:=\phi_2-\phi_1$, the quantity we need to consider is simply
\[Q_2=\int_M  |\phi_{2}- \phi_{1}|^2 dv=\int_M  |\psi|^2 dv.\]
In other words, by using the bundle morphism $\P$, we actually regard $\phi_\ld, \ld=1,2$ as sections living on the same bundle $u_1^*TN$. However, the pull-back connection $\n_\ld:=u_\ld^*\n^N$ acting on $\phi_\ld$ stays distinct.

Recall that by (\ref{e:sch1}), $\phi_\ld$ satisfies the following equation
\begin{equation}\label{e:sch4}
  \n_{\ld,t}\phi_{\ld}=J_0\Delta_{\ld}\phi_\ld + J_0R^N\#\phi_\ld\#\phi_\ld\#\phi_\ld+ J_0Ric^M\#\phi_\ld,
\end{equation}
where $\#$ denotes linear combinations of the components of involved terms. Since $\phi_1$ and $\phi_2$ are now regarded as sections on the same bundle, we may subtract (\ref{e:sch4}) for $\ld=1, 2$ to get
\[
  \n_{1,t}\psi+ (\n_{2,t}-\n_{1,t})\phi_2
  =J_0\Delta_{1,x}\psi+J_0(\Delta_{2,x}-\Delta_{1,x})\phi_2 + S,
\]
where
\[ S:= J_0(R^N(u_1)\#\phi_1\#\phi_1\#\phi_1-R^N(u_2)\#\phi_2\#\phi_2\#\phi_2)+ J_0Ric^M\#(\phi_1 - \phi_2).\]
Hence we have
\[\begin{aligned}
\frac12\frac{d}{dt}Q_2&=\int_M\<\psi, \n_{1,t}\psi\>dv\\
&=\int_M\<\psi,J_0\Delta_{1,x}\psi\>dv+\int_M\<\psi, S\>dv\\
&\quad +\int_M\<\psi, -(\n_{2,t}-\n_{1,t})\phi_2+J_0(\Delta_{2,x}-\Delta_{1,x})\phi_2\>dv.
\end{aligned}\]
The first term vanishes after integration by parts. Moreover, by the assumption of bounded geometry of $N$, it is easy to see that
\[ |S| \le C(d(u_1, u_2)+|\psi|),\]
where the constant depends on $R^N, \n^NR^N, Ric^M$ and the $L^\infty$-norm of $\phi_\ld$.
Thus we arrive at
\begin{equation}\label{e:Q2}
  \begin{aligned}
\frac12\frac{d}{dt}Q_2&\le
\int_M|\psi|\Big(|(\n_{2,t}-\n_{1,t})\phi_2|+|J_0(\Delta_{2,x}-\Delta_{1,x})\phi_2| +C|d|+C|\psi|\Big)dv\\
&\le C( \norm{d}_{L^2}^2+\norm{\psi}_{L^2}^2+\norm{(\n_{2,t}-\n_{1,t})\phi_2}_{L^2}^2+\norm{(\Delta_{2,x}-\Delta_{1,x})\phi_2}_{L^2}^2).\\
\end{aligned}
\end{equation}
Therefore, we are led to compute the difference of the two connections and corresponding Laplacians.

\subsection{Estimate of the connection}

Denote the difference of the two connections $\n_\ld = u_\ld^*\n^N$, which is a tensor, by
\[B:=\n_2-\n_1.\]
To be more specific, let $\om^\ii$ be the orthonormal co-frame which is the dual of $e_\ii$. Under the local frame $\{f_{\ld,\al}\}$ of the pull-back bundle $u_\ld^*TN$, we can write $\n_\ld=d+A_\ld$ where $A_\ld=A_{\ld,\ii}\om^\ii$ is a (skew-symmetric) matrix valued 1-form. Thus $B:=B_\ii\om^\ii=(A_{2,\ii}-A_{1,\ii})\om^\ii$.

Recall that by our construction, we have a map $U:[0,1]\times I\times M\to N$ such that $U(s,t,x):=\ga_{(t,x)}(s)$ is a geodesic. Thus we have a global pull-back bundle $U^*TN$, which is defined over $[0,1]\times I\times M$, with an orthonormal frame $\{\b{f}_\al\}$ which is defined by parallel transportation. Now let $\bn:=U^*\n^N$ denote the pull-back connection on $U^*TN$ which corresponds to an 1-form
$$\bar{A}=\bar{A}_sds+\bar{A}_\ii\om^\ii.$$
In particular, since $\b{f}_\al$ is parallel along $s$, $\bar{A}_s$ vanishes, leaving along $\bar{A}=\bar{A}_\ii\om^\ii$. The curvature of $\bn$ is given by the formula
\[\b{F}=d\b{A}+[\b{A},\b{A}].\]
Since $\b{A}_s=0$, the $ds\wedge \om^\ii$ component of $\b{F}$ is simply
\[ \b{F}_{s\ii}=\p_s\b{A}_\ii.\]

On the other hand, we have $U(0,t,x)=u_1(t,x)$, $U(1,t,x)=u_2(t,x)$. Obviously, $u_1^*TN$ and $u_2^*TN$ are just the restriction of $U^*TN$ at $s=0$ and $s=1$, respectively. Moreover, the restriction of $\bn$ at $u_\ld^*TN$ is just the pull-back connection $\n_\ld$. That is, $\bar{A}_\ii(0)=A_{1,\ii}$ and $\bar{A}_\ii(1)=A_{2,\ii}$. Therefore,
\[ B_\ii=A_{2,\ii}-A_{1,\ii}=\int_0^1\b{F}_{s\ii}ds.\]
Note that $\b{F}$ is in fact the pull-back of the curvature $R^N$ on $N$, i.e. $\b{F}=U^*R^N$. It follows
\begin{equation}\label{e:connection1}
 B_\ii=\int_0^1R^N(\bn_s U, \bn_\ii U)ds.
\end{equation}
Since $|\bn_s U|=|\p_s \ga_{(t,x)}|=d(u_1,u_2)$, we have
\begin{equation}\label{e:connection2}
|B_{\ii}|\le \sup_s|R^N||\bn_s U||\bn_\ii U|\le C\sup_s|\bn_\ii U|d.
\end{equation}

Next we need the following lemma to estimate the Jacobi field $\bn_\ii U$ and its derivatives. See the appendix of \cite{Mc} for another proof.
\begin{lem}\label{l:Jacobi}
  The derivatives of $U$ satisfy the following estimates:
  \begin{equation*}
    \begin{aligned}
      &\sup_s|\bn_\ii U|\le C(|\phi_{1,\ii}|+|\phi_{2,\ii}|),\\
      &\sup_s|\bn_i\bn_j U|\le C(|\n_{1,i}\phi_{1,j}|+|\n_{2,i}\phi_{2,j}|)+C(|\phi_{1,i}|+|\phi_{2,i}|)(|\phi_{1,j}|+|\phi_{2,j}|),\\
    \end{aligned}
  \end{equation*}
  where the constant $C$ depends on the derivative (up to second order) of the exponential map in the domain.
\end{lem}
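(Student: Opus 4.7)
The plan is to parametrize the family of connecting geodesics via the exponential map and interpret the derivatives of $U$ as Jacobi fields along each geodesic, then read off the estimates from the resulting linear boundary-value ODEs on the short interval $s\in[0,1]$.

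First I would write $U(s,t,x)=\exp_{u_1(t,x)}(sV(t,x))$ with $V:=\exp_{u_1}^{-1}(u_2)$, which is well-defined by Lemma~\ref{l1}. Then for each fixed $(t,x)$ the vector field $J_\ii(s):=\bn_\ii U$ along $\ga_{(t,x)}$ is precisely the Jacobi field of the one-parameter variation in the $e_\ii$-direction, so it satisfies
\begin{equation*}
\bn_s^2 J_\ii+R^N(J_\ii,\bn_s U)\bn_s U=0,\quad J_\ii(0)=\phi_{1,\ii},\ J_\ii(1)=\phi_{2,\ii}.
\end{equation*}
Since $|\bn_s U|=d(u_1,u_2)\le\de_0\le 1/(4\sqrt{K_0})$ and $|R^N|\le K_0$, the interval is short enough that this is a small perturbation of the flat equation $\bn_s^2 J=0$, and a standard comparison with the flat two-point problem gives a uniform bound $\sup_s|J_\ii(s)|\le C(|J_\ii(0)|+|J_\ii(1)|)$, which is the first asserted inequality.

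For the second estimate, set $H_{ij}(s):=\bn_i\bn_j U=\bn_i J_j$. Differentiating the Jacobi equation in the direction $e_i$ and using the commutator $[\bn_s,\bn_i]=R^N(\bn_s U,\bn_i U)$ acting on sections of $U^*TN$, one obtains a second-order linear boundary-value problem
\begin{equation*}
\bn_s^2 H_{ij}+R^N(H_{ij},\bn_s U)\bn_s U=G_{ij}(s),\quad H_{ij}(0)=\n_{1,i}\phi_{1,j},\ H_{ij}(1)=\n_{2,i}\phi_{2,j},
\end{equation*}
where the source $G_{ij}$ is a finite sum of contractions of $R^N$ and $\n^N R^N$ with $\bn_s U$, $J_i$ and $J_j$. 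By the bounded geometry of $N$ combined with the first estimate one has $|G_{ij}(s)|\le C(|\phi_{1,i}|+|\phi_{2,i}|)(|\phi_{1,j}|+|\phi_{2,j}|)$, and invoking the same comparison argument on $s\in[0,1]$ with this source produces the claimed bound.

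The principal technical task is the careful bookkeeping of the curvature commutators that appear when differentiating the Jacobi equation; once their structural form is pinned down, every term is controlled by the bounded geometry assumptions $|R^N|,|\n^N R^N|\le C$ together with the first estimate, so no further geometric input beyond the hypotheses of the theorems is needed.
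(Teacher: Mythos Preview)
Your argument is correct and close in spirit to the paper's, but the two proofs are packaged differently. The paper argues directly with the exponential map: writing a Jacobi field with $W(0)=0$ as $W(s)=D\exp_p|_{sT}(sV)$, it bounds $|W(s)|\le (\sup|D\exp_p|/\inf|D\exp_p|)\,|W(1)|$ and then uses linearity to split the general two-point problem into two one-sided problems. This makes the dependence on derivatives of $\exp$ explicit, as stated in the lemma. You instead work with the Jacobi ODE as a perturbation of $\bn_s^2 J=0$ and invoke a comparison/stability estimate on the short interval; your constant is expressed in terms of $K_0$ and $|\n^N R^N|$, which is equivalent via Rauch-type bounds. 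For the second inequality the paper is terse (``proved similarly by taking one more derivative''), whereas you write out the inhomogeneous Jacobi-type equation for $H_{ij}$ and identify the source $G_{ij}$; this is a useful elaboration. Either route is standard and neither requires more than the bounded-geometry hypotheses already in force.
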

\begin{proof}
  First recall that each Jacobi field $W$ can be generated by a family of variation of geodesics
  $$\ga(s,t)=\exp_p(s(T+tV))$$
  and has the form
  \[W(s)=\p_t\ga(s,0)=D\exp_p|_{sT}(sV).\]
  Therefore, in a small geodesic ball we have
  \[|W(s)|\le|D\exp_p|_{sT}|\cdot s|V|\le s|D\exp_{sT}|_x|/|D\exp_p|_{T}|W(1)|.\]
  It follows that $|W(s)|\le C|W(1)|$ where the constant $C=\sup|D\exp_p|/\inf|D\exp_p|$. The constant can be achieved since we have $D\exp_p|_0=id$ and the exponential map is smooth.

  Now for any $0\le \ii\le n$, $W:=\p_\ii U$ is a Jacobi field along the geodesic connection $u_1$ and $u_2$, which satisfies
  \[\left\{\begin{aligned}
    &\n_s^2W+R(W,T)T=0,\\
    &W(0)=\phi_{1,\ii}, \quad W(l)=\phi_{2,\ii}.
  \end{aligned}\right.\]
  Since the Jacobi equation is linear, we can decompose $W=W_1+W_2$ where $W_1, W_2$ are both Jacobi fields such that $W_1(0)=0, W_1(1)=\phi_{2,\ii}$ and $W_1(0)=\phi_{1,\ii}, W_1(1)=0$, and both satisfy the above estimate. Therefore first desired inequality follows. The second one can be proved similarly by taking one more derivative.
\end{proof}

\subsection{Estimate of Laplacian}

Now we are ready to estimate the difference of two Laplacian operators $\Delta_{1,x}$ and $\Delta_{2,x}$. Since $\n_1=\n_2-B$, we have
\[\begin{aligned}
  \Delta_{1,x}
  &= \n_{1,k}\n_{1,k}= (\n_{2,k}-B_k)\circ(\n_{2,k}-B_k)\\
  &=\n_{2,k}\n_{2,k}-\n_{2,k}\circ B_k - B_k\n_{2,k} +B_k^2)\\
  &=\Delta_{2,x} - \n_{2,k} B_k - 2B_k\n_{2,k} + B_k^2.
  \end{aligned} \]
Hence
\[\Delta_{2,x}-\Delta_{1,x}=\n_{2,k} B_k + 2B_k\n_{2,k} - B_k^2.\]

The last two terms on the right hand side of the equality can be easily handled by (\ref{e:connection2}). To estimate the first term, first observe
\[  \n_{2,k} B_k= \bn_k B_k + (\n_{2,k}-\bn_k)B_k,\]
where we can control the last term by
\[\n_{2,k}-\bn_k=\b{A}_k(1)-\b{A}_k(s)=\int_s^1\b{F}_{sk}ds\le Cd.\]
So all we need to deal with is the term $\bn_k B_k$, which we use (\ref{e:connection1}) to estimate
\[\begin{aligned}
\bn_k B_k&=\bn_k\int_0^1R^N(\bn_s U, \bn_k U)ds\\
  &= \int_0^1\n^N R^N(\bn_k U,\bn_s U, \bn_k U)+R^N(\bn_s U, \bn_k^2 U) + R^N(\bn_k\bn_s U, \bn_k U)ds\\
  &\le C(\sup_s|\bn_k U|^2 d+\sup_s|\bn_k^2 U|d+\sup_s|\bn_k U|\int_0^1|\bn_k\bn_s U|ds).
  \end{aligned} \]
The first term can be controlled by applying Lemma~\ref{l:Jacobi}. As for the last term, we apply Lemma~\ref{l:distance} to derive
\[\begin{aligned}
\int_0^1|\bn_k\bn_s U|ds&=\int_0^1|\bn_s\bn_k U|ds\le \(\int_0^1|\bn_s\bn_k U|^2ds\)^{\frac 12}\\
&\le |\P\bn_kU(1)-\bn_kU(0)|+C(|\bn_kU(1)|+|\bn_kU(0)|)d\\
&\le |\phi_{2,k}-\phi_{1,k}|+C(|\phi_{2,k}|+|\phi_{1,k}|)d\\
&\le |\psi|+Cd.
  \end{aligned} \]
Consequently, we have
\begin{equation}\label{e:connection3}
|(\Delta_{2,x}-\Delta_{1,x})\phi_2|\le C(d+\sup_s|\bn_k^2 U|d+|\psi|),
\end{equation}
where the constant only depends on $|\phi_{\ld}|$ and the target manifold $N$.

\begin{rem}\label{r:gap}
Here we fixed a gap in McGahagan's proof by applying Lemma~\ref{l:distance}. In fact, the term $D\p_s\ga$, which appeared in the estimate of the derivative of the curvature term (line 18, page 394 in \cite{Mc}), may blow up if the two solutions are too close to each other. In particular, the distance function $d(u_1,u_2)$ is not differentiable at $x\in M$ if $u_1(t,x)=u_2(t,x)$.
\end{rem}

\subsection{Uniqueness}

Finally, we continue the estimate of $Q_2$ and finish the proof of Theorem~\ref{t:main1} and Theorem~\ref{t:main2}. Combining the estimates (\ref{e:Q2}), (\ref{e:connection2}) and (\ref{e:connection3}), we obtain
\begin{equation*}
  \frac12\frac{d}{dt}Q_2\le C(\norm{\psi}_{L^2}^2 + \norm{d}_{L^2}^2 + \norm{\sup_s(|\bn_k^2 U|+|\bn_t U|)d}_{L^2}^2).
\end{equation*}
Applying Lemma~\ref{l:Jacobi} again, we can bound the last term by
\[\norm{\sup_s(|\bn_k^2 U|+|\bn_t U|)d}_{L^2}^2\le C(\norm{(|\n_{1,k}\phi_{1,k}|+|\n_{2,k}\phi_{2,k}|)d}_{L^2}^2 + \norm{d}_{L^2}^2).\]

Now for Theorem~\ref{t:main1}, we have $\n^2u_\ld\in L^\infty(I\times M)$, then
\[\norm{(|\n_{1,k}\phi_{1,k}|+|\n_{2,k}\phi_{2,k}|)d}_{L^2}\le (\norm{\n_{1,k}^2u_1}_{L^\infty}+\norm{\n_{2,k}^2u_2}_{L^\infty})\norm{d}_{L^2}.\]
For Theorem~\ref{t:main2} where $\n^2u_\ld\in L^\infty(I,L^m(M,N))$, we have
\[\norm{(|\n_{1,k}\phi_{1,k}|+|\n_{2,k}\phi_{2,k}|)d}_{L^2}\le (\norm{\n^2u_1}_{L^m}+\norm{\n^2u_2}_{L^m})\norm{d}_{L^{\frac{2m}{m-2}}}.\]
By Sobolev embedding and the estimate in Lemma~\ref{l:Hessian}, we have
\[\norm{d}_{L^{\frac{2m}{m-2}}}\le C\norm{d}_{W^{1,2}}\le C(\norm{d}_{L^2} + \norm{\psi}_{L^2}).\]
In either case, we obtain
\begin{equation}\label{e:Q2f}
  \frac12\frac{d}{dt}Q_2\le C(Q_1+Q_2).
\end{equation}

The inequalities (\ref{e:Q1}) and (\ref{e:Q2f}) together yield
\begin{equation*}
  \frac12\frac{d}{dt}(Q_1+Q_2)\le C(Q_1+Q_2),
\end{equation*}
where $C$ depends on the norms of $u_1$ and $u_2$ in the space $\S$. Since $Q_1(0)=Q_2(0)=0$ at initial time, we conclude the uniqueness by Gronwall's inequality and finish the proof of Theorem~\ref{t:main1} and \ref{t:main2}.


\end{document}